\documentclass[11pt]{article}
\usepackage[T1]{fontenc}
\usepackage[utf8]{inputenc}
\usepackage[english]{babel}
\usepackage{xcolor}
\usepackage{amsmath,amssymb,amsthm,mathtools}
\usepackage[a4paper,margin=2.6cm]{geometry}
\usepackage{tikz}
\usetikzlibrary{calc,intersections}
\usepackage{float}
\usepackage{hyperref}

\newtheorem{theorem}{Theorem}
\newtheorem{lemma}[theorem]{Lemma}
\newtheorem{proposition}[theorem]{Proposition}

\usepackage{caption}

\theoremstyle{definition}
\newtheorem{remark}[theorem]{Remark}
\newtheorem*{remark*}{Remark}

\addto\captionsenglish{%
}

\newcommand{\R}{\mathbb R}
\newcommand{\pos}{\operatorname{pos}}

\title{The five distance theorem for an arbitrary norm}
\author{Nikita A. Mironov and Oleg R. Musin}
\date{}

\begin{document}
\maketitle

\begin{abstract}
The three gap theorem states that the points of the Kronecker sequence $\alpha,2\alpha,\ldots,N\alpha$, considered modulo one, divide the circle into intervals of at most three distinct lengths. In a two-dimensional nearest-neighbour
analogue, Haynes and Marklof proved that the Kronecker sequence modulo an
arbitrary unimodular lattice determines at most five distinct
nearest-neighbour distances in the Euclidean norm, and that this bound is
sharp. Dettmann subsequently constructed examples attaining five distinct
distances for every $\ell_p$-norm, $1\leq p\leq\infty$. We prove the
corresponding upper bound for every norm on $\mathbb R^2$:
for every full-rank lattice $L$, every
$\boldsymbol{\alpha}\in\mathbb R^2$, and every $N\in\mathbb N$, the number
of distinct nearest-neighbour distances is at most five. For strictly convex norms, the proof extends the lattice-theoretic argument of Haynes and Marklof by replacing the Euclidean angular estimates with a cone lemma based on a proper Brass angular measure. The result for arbitrary norms is then obtained by a strictly convex perturbation and a limiting argument.
\end{abstract}

\section{Introduction}
A \emph{Kronecker sequence} on the $d$-dimensional torus
$\mathbb{T}^d=\mathbb{R}^d/L$, where $L\subset\mathbb{R}^d$ is a
full-rank lattice, is the sequence
$n\boldsymbol{\alpha}+L$, $n=1,2,\ldots$,
generated by a vector $\boldsymbol{\alpha}\in\mathbb{R}^d$. In dimension
one, with $L=\mathbb{Z}$ and $\mathbb{T}^1$ identified with $[0,1)$, this
takes the familiar form $x_n=\{n\alpha\}$, $n=1,2,\ldots$,
where $\{x\}$ denotes the fractional part of $x\in\mathbb{R}$. Weyl's
equidistribution theorem states that, for every irrational $\alpha$, the
sequence $(\{n\alpha\})_{n\geq 1}$ is uniformly distributed in $[0,1)$
and is therefore dense~\cite{Weyl}. The three-gap theorem asserts that,
for every $\alpha\in\mathbb{R}$ and every $N\geq 1$, the distinct points
among $\{\alpha\},\{2\alpha\},\ldots,\{N\alpha\}$ partition the circle $\mathbb{R}/\mathbb{Z}$ into intervals of at most
three distinct lengths.

A natural higher-dimensional analogue of the gaps between consecutive
points is provided by nearest-neighbour distances. Let
$\|\cdot\|$ be a norm on $\mathbb{R}^d$. Throughout, a lattice in $\mathbb R^d$ is assumed to have full rank. Following Haynes and
Marklof~\cite{HM}, we define
nearest-neighbour distances in the periodic lift of the finite Kronecker
sequence. In particular, a different lattice lift of the same torus point is
allowed, while the zero displacement vector is excluded. For $1\leq n\leq N$,
set
\begin{equation}\label{eq:delta-definition}
\delta_{n,N}^{\|\cdot\|}
=
\min\bigl\{
  \|n\boldsymbol{\alpha}-m\boldsymbol{\alpha}-\ell\|:
  1\leq m\leq N,\ \ell\in L,\
  n\boldsymbol{\alpha}-m\boldsymbol{\alpha}-\ell\ne0
\bigr\}.
\end{equation}

We then let
\[
    g_N^{\|\cdot\|}(\boldsymbol{\alpha},L)
       =
       \#\bigl\{
          \delta_{n,N}^{\|\cdot\|}:1\leq n\leq N
       \bigr\}
\]
denote the number of distinct nearest-neighbour distances. 

Biringer and Schmidt studied nearest-neighbour distances in finite
orbits of isometries and showed, in particular, that on a flat
$d$-dimensional torus their number is bounded above by
$3^d+1$~\cite{BiringerSchmidt}. For the standard two-dimensional torus, the bound
$g_N^{\ell_2}(\boldsymbol{\alpha},\mathbb Z^2)\leq5$
follows by combining Chevallier's correspondence between
nearest-neighbour distances and best-approximation denominators with
Romanov's four-step recurrence; see
\cite{Chevallier,Romanov,Shutov,Shulga}.
Haynes and Marklof subsequently established this bound for every
unimodular lattice $L$ and showed that five distinct distances can
occur~\cite{HM}. Their result extends immediately to every full-rank lattice, since simultaneously rescaling $L$ and $\boldsymbol{\alpha}$ multiplies all nearest-neighbour distances by the same positive constant.
They further conjectured that the corresponding sharp upper bound in dimension three is nine. This conjecture was recently confirmed by
Shulga, who also proved the estimate
$g_N^{\|\cdot\|}(\boldsymbol{\alpha},L)\leq2^d+1$
for every inner-product norm $\|\cdot\|$ on $\mathbb R^d$ and every
full-rank lattice $L\subset\mathbb R^d$~\cite{Shulga}. For the maximum norm, Chevallier proved the corresponding
estimate on the standard torus~\cite{Chevallier}:
$g_N^{\ell_\infty}(\boldsymbol{\alpha},\mathbb Z^2)\leq 5$.
Haynes and Ramirez later proved that, for every unimodular lattice $L$,
$g_N^{\ell_\infty}(\boldsymbol{\alpha},L)\leq 5$,
and that this bound is sharp~\cite{HaynesRamirez}.

Dettmann subsequently constructed examples with five distinct
nearest-neighbour distances for every $\ell_p$-norm,
$1\leq p\leq\infty$, and conjectured that five is the universal maximum
in dimension two~\cite{Dettmann}. For comparison, Shutov, working on the
standard torus under an irrationality assumption and with a different
nearest-neighbour convention, proved an upper bound of $K+1$, where $K$ is
the contact number of the unit ball. For the $\ell_p$-norm in dimension two,
$1<p<\infty$, this bound is $7$~\cite{Shutov}.

Our main result proves that the upper bound of five holds for every norm on $\mathbb{R}^2$. Our proof retains the lattice-theoretic reduction and the norm-independent combinatorial part of the argument of Haynes and Marklof. For strictly convex norms, their specifically Euclidean angular argument is replaced by a cone lemma for normed planes, proved using a proper Brass angular measure. The strict convexity assumption is then removed by perturbing an arbitrary norm with a small Euclidean component and passing to the limit.

\begin{theorem}\label{thm:main}
Let $L\subset\mathbb{R}^2$ be a full-rank lattice, let
$\boldsymbol{\alpha}\in\mathbb{R}^2$, let $N\in\mathbb{N}$, and let
$\|\cdot\|$ be an arbitrary norm on $\mathbb{R}^2$. Then
\[
    g_N^{\|\cdot\|}(\boldsymbol{\alpha},L)\leq5.
\]
\end{theorem}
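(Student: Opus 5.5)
We follow the strategy announced in the introduction: first treat strictly convex norms by adapting the Haynes--Marklof lattice argument, then pass to arbitrary norms by perturbation.

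\emph{Lattice reduction.} Fix $N$ and consider the lattice $\mathcal L=\mathbb Z(\boldsymbol\alpha,1)+(L\times\{0\})\subset\R^3$. When $\boldsymbol\alpha$ is rational with respect to $L$, replace it by a discrete subgroup, or perturb it slightly.
\begin{itemize}
\item The vector $n\boldsymbol\alpha-m\boldsymbol\alpha-\ell$ is the projection of a point $(\mathbf v,k)\in\mathcal L$ with $k=n-m$. The admissible range for $m$ is $1\le m\le N$, i.e. $n-N\le k\le n-1$.
\item Hence $\delta_{n,N}$ is the minimum of $\|\mathbf v\|$ over nonzero $\mathbf v$ with $(\mathbf v,k)\in\mathcal L$ and $-(N-n)\le k\le n-1$.
\end{itemize}
As in Haynes and Marklof, the value therefore depends only on the pair of half-window widths $(n-1,N-n)$. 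Consider the successive "minimal" vectors along the time axis, namely those $(\mathbf v,k)$ with $\|\mathbf v\|$ smaller than for every lattice point with $0<|k'|<|k|$ or $0<k'<k$. The nearest-neighbour distance of $n$ is then attained by one of these distinguished vectors, determined by the positive direction and by the negative direction. This combinatorial part is norm-independent, and we reuse it verbatim. It reduces the count to showing that only a bounded number of distinguished vectors can realise minima across all $n$.

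\emph{Key geometric input: cone lemma.} Haynes--Marklof bound the number of relevant vectors by a Euclidean angle estimate. The estimate shows that two relevant short vectors cannot lie in a common cone of angle $<\pi/3$, and it is combined with the fact that differences of relevant vectors are again lattice vectors with smaller time coordinate. We replace it with the following statement for a strictly convex norm.
\begin{itemize}
\item If $\mathbf u,\mathbf w$ are nonzero and lie in a cone of Brass angular measure at most $\pi/3$ (measure normalized to total $2\pi$, with opposite directions separated by $\pi$), and $\|\mathbf u\|\le\|\mathbf w\|$, then $\|\mathbf w-\mathbf u\|<\|\mathbf w\|$.
\item We prove this using the properness and additivity of the Brass measure. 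Consider the three vectors $\mathbf u,\mathbf w,\mathbf w-\mathbf u$ of a triangle: if the angle at the origin is small, the side opposite it cannot be the longest, the normed analogue of "the largest side faces the largest angle".
\item Strict convexity gives the strict inequality. Without it, equality cases on flat pieces of the unit circle break the argument.
\end{itemize}
With the cone lemma, the Haynes--Marklof case analysis goes through. Partitioning directions into six cones of measure $\pi/3$, together with the sign structure of the time coordinate, limits the distinct minimal distances to at most five. Specifically, the bound becomes $3+2$: three from one recurrence step and at most two extra, exactly as in their proof.

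\emph{Arbitrary norms.} Let $\|\cdot\|_\varepsilon=\|\cdot\|+\varepsilon\|\cdot\|_2$, which is strictly convex. For fixed $N,\boldsymbol\alpha,L$, each $\delta_{n,N}^{\|\cdot\|_\varepsilon}$ is a minimum over a locally finite set of vectors, and only finitely many of them are relevant in a bounded region. So $\delta_{n,N}^{\|\cdot\|_\varepsilon}\to\delta_{n,N}^{\|\cdot\|}$ as $\varepsilon\to0$. The danger is that distinct values could collapse or, worse, that equal values for $\|\cdot\|$ could be split. Splitting only increases the count for $\varepsilon>0$, and limits can only merge values. Hence
\[g_N^{\|\cdot\|}\le\liminf_{\varepsilon\to0} g_N^{\|\cdot\|_\varepsilon}\le5.\]
To justify this, note that if $g_N^{\|\cdot\|}\ge6$, then six distinct limit values force six distinct values for small $\varepsilon$.

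The main obstacle is the cone lemma: finding the right normed replacement for the Euclidean angle $\pi/3$ so that the Haynes--Marklof triangle-inequality arguments survive. I would first try the Brass measure and verify additivity along with the key triangle property. Only after that would I check each Euclidean angle step of their proof.
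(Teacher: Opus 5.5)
\textbf{There is a genuine gap: the geometric core of your plan yields at most six distances, not five.} Your reduction and the final norm-perturbation step are fine; the latter is essentially the paper's argument.

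Your ``cone lemma'' is a two-vector statement. In the argument it is used only through its contrapositive: the directions of the representatives $v_1,\dots,v_{K-1}$ are pairwise separated by Brass measure greater than $\pi/3$. These representatives satisfy $\|v_i\|<\|v_j\|$ and $\|v_i-v_j\|>\|v_j\|$ for $i<j$. The paper gets exactly this separation from its normalisation lemma and its chord--measure lemma.

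Pairwise separation permits five directions, so it gives only $K-1\le5$, i.e.\ $g_N\le6$. For example, five Euclidean vectors at angles $2\pi/5$ with slightly increasing lengths satisfy every pairwise condition. Your pigeonhole over six cones of measure $\pi/3$ gives even less, and the ``$3+2$'' count does not correspond to any step of the Haynes--Marklof proof. So the proof breaks at the claim that ``with the cone lemma the case analysis goes through''. The remaining Euclidean step in Haynes--Marklof is a three-vector estimate, and your plan never replaces it.

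Two ingredients are missing.
\begin{itemize}
\item[(a)] \emph{The triple inequality} $\|v_i-v_j-v_k\|\ge\|v_k\|$ for $i<j<k\le K-1$. First show $u_i\ge\frac12$ for $i<K$; otherwise $\pm(u_i,v_i)$ cover every window, forcing $i=K$. Next show $u_i>u_j$ for $i<j<K$. Then $(u_k+u_j-u_i,\,v_k+v_j-v_i)$ is a lattice vector with time coordinate in $(0,u_k)$ and nonzero spatial part, which gives the inequality.
\item[(b)] \emph{A genuinely three-vector cone lemma}: if $0<\|x\|<\|y\|<\|z\|$, $x\in\pos(y,z)$, $\|x-y\|>\|y\|$ and $\|x-z\|>\|z\|$, then $\|x-y-z\|<\|z\|$. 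Its proof uses the Brass measure differently from yours. Write $r=\|x\|$, $Y=y/\|y\|$, $Z=z/\|z\|$, $X=x/\|x\|$. The arcs $YX$ and $XZ$ each have measure exceeding $\pi/3$. Hence the arc from $Z$ to $-Y$ has measure below $\pi/3$, which gives $\|Y+Z\|<1$ and therefore $\|y+z\|<\|z\|$. Chord monotonicity on the circle of radius $r$ then bounds $\|y+z-x\|$ by $\max\{\|y+z-rY\|,\|y+z-rZ\|\}$, and both terms are $<\|z\|$.
\end{itemize}

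With these, suppose there are five vectors. The three elementary arcs not adjacent to $a_1=v_1/\|v_1\|$ have total measure exceeding $\pi$. So $v_1\in\pos(v_j,v_k)$, where $v_j,v_k$ are its cyclic neighbours and $1<j<k$. This contradicts (a) together with (b), giving $K\le5$.

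Smaller points:
\begin{itemize}
\item Your two-vector lemma as stated (measure at most $\pi/3$, $\|u\|\le\|w\|$, strict conclusion) is false for norm-equilateral triangles, where $\|w-u\|=\|w\|$. It needs $\|u\|<\|w\|$, which is what holds in the application.
\item ``Largest side faces largest angle'' is not an available fact for Brass measures. The correct route is normalisation plus chord monotonicity.
\item $\mathcal L$ is always a full-rank lattice, being a linear image of $L\times\mathbb Z$. So rational $\boldsymbol\alpha$ needs no special treatment. Perturbing $\boldsymbol\alpha$ would also be illegitimate, since the distances are discontinuous in $\boldsymbol\alpha$ at such points.
\end{itemize}
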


\section{Brass measure and auxiliary lemmas}

Recall that a normed plane is strictly convex if its
unit circle contains no nontrivial line segment.
Let $(\mathbb V,\|\cdot\|)$ be such a plane, set $S=\{x\in\mathbb V:\|x\|=1\}$, and write $O$ for the origin. For $a,b\in S$ with $b\ne-a$, let
$\widehat{aOb}$ denote the minor arc of $S$ from $a$ to $b$.
When $b=-a$, either semicircle with endpoints $a$ and $b$ may be chosen. An \emph{angular measure} on $S$ is a nonatomic
Borel measure that is invariant under the antipodal map and has total
mass $2\pi$. It is called a \emph{Brass measure} if every angle of a
norm-equilateral triangle has measure $\pi/3$, and it is called
\emph{proper} if every nontrivial arc of $S$ has positive measure.

Every strictly convex normed plane admits a proper Brass
measure. Indeed, this follows by combining
\cite[Theorem~15]{SwanepoelBrass} with
\cite[Lemma~14]{SwanepoelBrass}: strict convexity implies that
the unit circle contains no nontrivial line segment and hence that
the parameter $\lambda(\mathbb V)$ appearing there is equal to zero. For the remainder of the geometric part of the proof, we fix such a measure $\mu$ on
the unit circle $S$.

We first record a normalisation observation.

\begin{lemma}\label{lem:normalization}
Suppose that $v,u\in \mathbb V$ satisfy $0<\|u\|<\|v\|$ and
$\|u-v\|>\|v\|$.
Then, for $U=u/\|u\|$ and $V=v/\|v\|$, one has
$\|U-V\|>1$.
\end{lemma}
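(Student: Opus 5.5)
Only the triangle inequality and homogeneity of the norm will be used; strict convexity is not needed. Write $s=\|u\|$ and $t=\|v\|$, so that $0<s<t$, $u=sU$ and $v=tV$. The idea is to pass from $u-v$ to $U-V$ in two steps: first move $v$ to the point $sV$, which has the same norm as $u$, and then rescale by $1/s$.

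The first step uses the triangle inequality in the form
\[
\|u-v\|\le \|u-sV\|+\|sV-v\| = s\|U-V\|+(t-s),
\]
since $sV-v=(s-t)V$ has norm $t-s$. Combining this with the hypothesis $\|u-v\|>t$ gives
\[
s\|U-V\| \ge \|u-v\|-(t-s) > t-(t-s) = s.
\]
Because $s>0$, dividing by $s$ yields $\|U-V\|>1$.

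There is no real obstacle. The only choice that matters is the intermediate point: it must be $sV$, the point on the ray through $v$ at distance $s$ from $O$, so that the error term is exactly $t-s$ and cancels against the $t$ in the hypothesis. The hypothesis $s<t$ is used only to make that error term equal to $t-s$ with the correct sign, and $s>0$ is what allows the final division.
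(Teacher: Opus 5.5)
Your proof is correct and is essentially the paper's argument. The paper sets $\lambda=\|u\|/\|v\|$ and uses $\lambda U-V=\lambda(U-V)+(1-\lambda)(-V)$, which is your splitting $u-v=s(U-V)+(s-t)V$ divided by $t$; the only difference is that the paper argues by contradiction from $\|U-V\|\le 1$, whereas you derive the strict inequality directly.
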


\begin{proof}
Set $\lambda=\frac{\|u\|}{\|v\|}\in(0,1)$.
Suppose, for contradiction, that $\|U-V\|\leq1$. Since
$\lambda U-V=\lambda(U-V)+(1-\lambda)(-V)$,
the convexity of the norm gives
\[
\begin{aligned}
    \frac{\|u-v\|}{\|v\|}
      &=\|\lambda U-V\| \\
      &\leq \lambda\|U-V\|+(1-\lambda)\|V\| \\
      &\leq \lambda+(1-\lambda)=1,
\end{aligned}
\]
where we used $\|V\|=1$. Hence $\|u-v\|\leq\|v\|$, contrary to the
hypothesis. 
\end{proof}

We shall also need the following monotonicity property of chords.

\begin{lemma}[Chord monotonicity]\label{lem:monotonicity}
Let $a,b,c\in\mathbb V\setminus\{0\}$, with $a\neq c$, and suppose that
$b$ and $c$ lie on the same circle centred at the origin, that is,
$\|b\|=\|c\|$. If the ray $Ob$
lies in the angle between the rays $Oa$ and $Oc$, whose size is at most
$\pi$, then $\|a-b\|\leq \|a-c\|$.
\end{lemma}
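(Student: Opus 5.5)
The plan is to reduce the claim to convexity of the norm along auxiliary segments. First dispose of degenerate configurations. If the ray $Ob$ coincides with $Oc$, then $b=c$ because $\|b\|=\|c\|$, and there is nothing to prove. If it coincides with $Oa$, then $b=\kappa a$ with $\kappa=\|c\|/\|a\|$, so $\|a-b\|=\bigl|\|a\|-\|c\|\bigr|\le\|a-c\|$ by the triangle inequality. If the angle $aOc$ equals $\pi$, I will either treat it directly or obtain it as a limit of angles $<\pi$ using continuity of the norm. From now on the ray $Ob$ lies strictly inside a convex angle of size $<\pi$, so that $b=\lambda a+\nu c$ with $\lambda,\nu>0$. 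Strict convexity is not needed; only convexity and homogeneity of $\|\cdot\|$ are used.

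For the main construction I take the line through $c$ and $b$ and follow it beyond $b$. Since it enters the open angle at $b$, it either leaves the angle through a point $x=ka$, $k>0$, on the ray $Oa$, or stays in it forever, which is the parallel case and will be handled as $k=\infty$ by a limiting argument. In the first case $b\in[c,x]$. The function $t\mapsto\|(1-t)c+tx\|$ is convex and takes the same value at $t=0$ (the point $c$) and at the interior parameter of $b$. Hence it does not decrease afterwards, so $k\|a\|=\|x\|\ge\|c\|$. By convexity of $y\mapsto\|a-y\|$ on $[c,x]$,
\[
\|a-b\|\le\max\bigl(\|a-c\|,\ \|a-x\|\bigr)=\max\bigl(\|a-c\|,\ |1-k|\,\|a\|\bigr).
\]
If $k\le1$, then $(1-k)\|a\|\le\|a\|-\|c\|\le\|a-c\|$, which finishes this case.

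I expect the case $k>1$ to be the main obstacle, because then $x$ lies beyond $a$ and the crude bound $(k-1)\|a\|$ need not be smaller than $\|a-c\|$. Here $a=sx$ with $s=1/k\in(0,1)$, and the task becomes showing $\|sx-b\|\le\|sx-c\|$ for points $sx$ on the segment $[O,x]$. This inequality holds with equality at $s=0$ (since $\|b\|=\|c\|$) and trivially at $s=1$ (since $b\in[c,x]$).

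My first attempt will be the splitting $sx-b=(1-\theta)(sx-c)+\theta(s-1)x$, where $b=(1-\theta)c+\theta x$. It reduces the claim to $\|sx-c\|\ge(1-s)\|x\|$, i.e. $\|a-c\|\ge\|x-a\|$, which I would try to get from the triangle $O,c,x$ together with $\|x\|\ge\|c\|$. If that fails, I would switch auxiliary lines and use the line through $a$ and $b$ instead, meeting the ray $Oc$ at $z=mc$. The case $m\ge1$ is then handled in the same way, by convexity on $[a,z]$ and comparison of $\|z\|$ with $\|c\|=\|b\|$. The plan is to check that the two bad cases ($k>1$ for the first line and $m<1$ for the second) cannot occur simultaneously, so that one of the two auxiliary lines always works.
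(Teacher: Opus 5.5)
\textbf{There is a genuine gap.} Your degenerate cases and your case $k\le 1$ are fine. Your dichotomy is also correct as far as it goes. Writing $b=\lambda a+\nu c$ with $\lambda,\nu>0$, one gets $k=\lambda/(1-\nu)$ when $\nu<1$ and $m=\nu/(1-\lambda)$ when $\lambda<1$. Hence $k\le 1\iff\lambda+\nu\le 1$, and (for $\lambda<1$) $m\ge 1\iff\lambda+\nu\ge 1$.

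The gap is the region $\lambda\ge 1$, which you never handle. There the ray from $a$ through $b$ never meets the ray $Oc$, so there is no point $z$. At the same time $\lambda+\nu>1$, so the first line is in its bad regime ($k>1$, or $k=\infty$). This region is open, since $\lambda$ depends continuously on the configuration; it contains, for instance, every configuration with $\|a\|$ small compared with $\|b\|=\|c\|$. So no limiting argument from configurations with $\lambda<1$ can reach it.

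Your fallback reduction $\|a-c\|\ge\|x-a\|$ is false there. Take the Euclidean norm with $c=(1,0)$, $b=(\tfrac12,\tfrac{\sqrt3}{2})$, $a=(0,\tfrac1{10})$, so $\lambda=5\sqrt3$ and $\nu=\tfrac12$. Then $x=(0,\sqrt3)$ and $\|x-a\|\approx 1.63>\|a-c\|\approx 1.005$. Meanwhile the line from $a$ through $b$ heads away from the positive $x$-axis. Neither of your auxiliary lines produces an argument in this configuration, so the proposal does not prove the lemma.

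\textbf{The missing idea} is to use the ray $Ob$ itself rather than lines through $b$. Because the angle is at most $\pi$, the ray $Ob$ always meets the segment $[a,c]$ at some point $p$.
\begin{itemize}
\item If $p\in[O,b]$ (your regime $\lambda+\nu\ge 1$, including $\lambda\ge 1$), then $\|p-b\|=\|b\|-\|p\|=\|c\|-\|p\|\le\|p-c\|$.
\item If $b\in[O,p]$ (your regime $k\le 1$), then $\|p-b\|=\|p\|-\|b\|\le\|p-c\|+\|c\|-\|b\|=\|p-c\|$.
\end{itemize}
In both cases $\|a-b\|\le\|a-p\|+\|p-b\|\le\|a-p\|+\|p-c\|=\|a-c\|$. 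This is the paper's proof, and it never relies on an intersection point that may fail to exist.

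Your $m\ge 1$ case is correct once $z$ exists, but it is not ``the same way'' as the $k\le 1$ case, since $\|a-y\|$ is linear along $[a,z]$. It is a pure triangle-inequality argument: $\|a-b\|=\|a-z\|-\|z-b\|\le\|a-z\|-(\|z\|-\|c\|)=\|a-z\|-\|z-c\|\le\|a-c\|$.
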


\begin{proof}
Assume first that $Ob$ is distinct from the two boundary rays.
The ray $Ob$ intersects the segment $[a,c]$ at a point $p$.
This remains true when the angle is equal to $\pi$, in which case
$p=O$.

Since $O,p,b$ lie on the same ray, either $p\in[O,b]$ or
$b\in[O,p]$, see Figure~\ref{fig:monotonicity}.

If $p\in[O,b]$, then
\[
\begin{aligned}
 \|b\|+\|a-c\|
 &=\bigl(\|p\|+\|p-b\|\bigr)
   +\bigl(\|a-p\|+\|p-c\|\bigr)\\
 &\geq \|c\|+\|a-b\|.
\end{aligned}
\]
Since $\|b\|=\|c\|$, this gives
$\|a-b\|\leq\|a-c\|$.

If $b\in[O,p]$, then
\[
 \|p-b\|=\|p\|-\|b\|
 \leq\|p-c\|+\|c\|-\|b\|
 =\|p-c\|,
\]
and hence
\[
 \|a-b\|
 \leq\|a-p\|+\|p-b\|
 \leq\|a-p\|+\|p-c\|
 =\|a-c\|.
\]

If $Ob=Oa$, then $a$ and $b$ lie on the same ray. The reverse triangle inequality gives
\[
    \|a-c\|
    \geq \bigl|\|a\|-\|c\|\bigr|
    =\bigl|\|a\|-\|b\|\bigr|
    =\|a-b\|.
\]

If $Ob=Oc$, then $\|b\|=\|c\|$ implies $b=c$. This proves the required
inequality.
\end{proof}

\begin{figure}[htbp]
    \centering
    \begin{tikzpicture}[
    scale=1.7,
    point/.style={
        circle,
        fill=cyan!30!black,
        inner sep=1.1pt
    },
    every node/.append style={
        text=cyan!30!black
    }
]

    \def\radius{1.2}

    \coordinate (O1) at (-2,0);
    \coordinate (O2) at (2,0);

    \draw[very thick, magenta!50!black]
        (-4.3,0) -- (-2,0);
    \draw[very thick, magenta!50!black]
        (-0.3,0) -- (2,0);

    \draw[very thick, magenta!50!black]
        (O1) circle (\radius);

    \draw[very thick, magenta!50!black]
        (O2) circle (\radius);

    \node[point, label=below:$O$] at (O1) {};
    \node[point, label=below:$O$] at (O2) {};

    \coordinate (A1) at (-4.3,0);
    \coordinate (A2) at (-0.3,0);
    \node[point, label=below:$a$] at (A1) {};
    \node[point, label=below:$a$] at (A2) {};

    \coordinate (C1) at ($(O1)+(55:\radius)$);
    \coordinate (B1) at ($(O1)+(120:\radius)$);

    \coordinate (B2) at ($(O2)+(160:\radius)$);
    \coordinate (C2) at ($(O2)+(100:\radius)$);

    \node[point, label=above right:$c$] at (C1) {};
    \node[point, label=above left:$b$]  at (B1) {};
    \node[point] at (B2) {};
\node[
    right,
    xshift=3pt,
    yshift=3pt
] at (B2) {$b$};
    \node[point, label=below right:$c$] at (C2) {};

    \path[name path=lineA1C1] (A1) -- (C1);
    \path[name path=lineO1B1] (O1) -- (B1);

    \path[
    name intersections={of=lineA1C1 and lineO1B1, by=P1}
    ];
    \node[point, label=above right:$p$] at (P1) {};

\path[name path=lineA2C2]
    ($(A2)!-1.5!(C2)$) --
    ($(A2)! 1.5!(C2)$);

\path[name path=lineO2B2]
    ($(O2)!-1.5!(B2)$) --
    ($(O2)! 1.5!(B2)$);

\path[
    name intersections={of=lineA2C2 and lineO2B2, by={P2}}];

\node[
    point,
    label=above:$p$
] at (P2) {};

    \draw[very thick, cyan!60!black]
        (A1) -- (B1);
    \draw[very thick, cyan!60!black]
        (A1) -- (C1);
    \draw[very thick, magenta!50!black]
        (O1) -- (B1);
    \draw[very thick, magenta!50!black]
        (O1) -- (C1);

    \draw[very thick, cyan!60!black]
        (A2) -- (B2);
    \draw[very thick, cyan!60!black]
        (A2) -- (C2);
    \draw[very thick, magenta!50!black]
        (O2) -- (P2);
    \draw[very thick, magenta!50!black]
        (O2) -- (C2);

\end{tikzpicture}
    \caption{Illustration of the proof of the chord monotonicity lemma}
    \label{fig:monotonicity}
\end{figure}

The next lemma relates chord lengths to the Brass angular measure.

\begin{lemma}\label{lem:brass-chord}
Let $a,b\in S$. Then
$\|a-b\|\geq1\Longrightarrow
 \mu(\widehat{aOb})\geq\frac{\pi}{3}$.
If $\|a-b\|>1$, then
$\mu(\widehat{aOb})>\frac{\pi}{3}$.
\end{lemma}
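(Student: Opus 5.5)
The plan is to build a norm-equilateral triangle with one vertex at $O$ and one at $a$, whose third vertex lies on the arc $\widehat{aOb}$, and then compare measures. In a strictly convex normed plane, the continuous function $t\mapsto\|x(t)-a\|$ along $S$ starts at $0$ at $x=a$ and reaches $\|{-a}-a\|=2$ at the antipode. Hence on the arc from $a$ towards $b$, continued to $-a$ if necessary, there is a point $c\in S$ with $\|a-c\|=1$. Then $O,a,c$ is a norm-equilateral triangle with side $1$. By the Brass property, the angle at $O$ has measure $\pi/3$, i.e.\ $\mu(\widehat{aOc})=\pi/3$, where $\widehat{aOc}$ is the minor arc. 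I will take $c$ to be the point with $\|a-c\|=1$ lying on the semicircle from $a$ through $b$ to $-a$, so that the minor arc $\widehat{aOc}$ and $\widehat{aOb}$ start at $a$ in the same direction.

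Next I would show that $c$ lies on $\widehat{aOb}$, i.e.\ $\widehat{aOc}\subseteq\widehat{aOb}$. Suppose instead that $b$ lies strictly inside the arc $\widehat{aOc}$, so that the ray $Ob$ lies in the angle between $Oa$ and $Oc$, which has size less than $\pi$. Since $\|b\|=\|c\|=1$, Lemma~\ref{lem:monotonicity} gives $\|a-b\|\le\|a-c\|=1$. For the first claim, if $\|a-b\|\geq1$ this forces $\|a-b\|=1$. In that case $b$ itself is a third vertex of an equilateral triangle, and applying the Brass property to $O,a,b$ gives $\mu(\widehat{aOb})=\pi/3$ directly. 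Otherwise $\widehat{aOc}\subseteq\widehat{aOb}$, and monotonicity of $\mu$ gives $\mu(\widehat{aOb})\ge\mu(\widehat{aOc})=\pi/3$. The case $b=-a$ is trivial, since a semicircle has measure $\pi$ by antipodal invariance.

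For the strict claim, $\|a-b\|>1$ contradicts $\|a-b\|\le1$, so $c\in\widehat{aOb}$, and $c\ne b$ because $\|a-c\|=1<\|a-b\|$. The arc $\widehat{aOb}\setminus\widehat{aOc}$ is then a nontrivial arc from $c$ to $b$. Since $\mu$ is proper, this arc has positive measure, which gives $\mu(\widehat{aOb})>\pi/3$.

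The main obstacle is the orientation bookkeeping: making sure $c$ is chosen on the correct side and that the arc inclusions are genuine. Strict convexity is what makes this work. It ensures the distance $\|x-a\|$ is strictly monotone along each semicircle from $a$ to $-a$, which follows from Lemma~\ref{lem:monotonicity} combined with the strictness that strict convexity supplies. That monotonicity makes $c$ unique on that side and turns the inclusion into a clean comparison.
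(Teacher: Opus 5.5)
Your proof is correct and follows the paper's route: find $c$ on the arc with $\|a-c\|=1$, apply the Brass property to the triangle $O,a,c$, use arc inclusion, and use properness for the strict inequality. The paper locates $c$ more directly, by applying the intermediate value theorem on $\widehat{aOb}$ itself, where the endpoint value $\|a-b\|\ge 1$ already suffices. That makes your chord-monotonicity case split unnecessary, and likewise the closing claim that strict convexity gives strict monotonicity and a unique $c$, which you do not prove but also never actually use.
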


\begin{proof}
If $b=-a$, the chosen arc is a semicircle and has measure $\pi$ since $\mu$
is nonatomic and invariant under the antipodal map. Now suppose
that $b\ne-a$. As $q$ moves along the minor arc from $a$ to $b$, the function
$q\longmapsto\|a-q\|$
is nondecreasing by Lemma~\ref{lem:monotonicity}: for two points $q_1,q_2$
occurring in this order, apply that lemma with $a$ fixed and with $b=q_1$ and
$c=q_2$. This function is continuous, equals zero at $q=a$, and is at least
one at $q=b$. Hence there is a point $q$ on the arc such that $\|a-q\|=1$.

The triangle with vertices $O,a,q$ is norm-equilateral:
$\|a\|=\|q\|=\|a-q\|=1$.
By the definition of a Brass measure,
$\mu(\widehat{aOq})=\pi/3$. The arc $\widehat{aOq}$ is contained in
$\widehat{aOb}$, which proves the non-strict inequality. If $\|a-b\|>1$,
then $q\ne b$; the remaining arc from $q$ to $b$ is nontrivial and therefore
has positive measure. Consequently, $\mu(\widehat{aOb})>\pi/3$.
\end{proof}

For $u,v\in\mathbb V$, let
$\pos(u,v)=\{su+tv:s,t\geq0\}$
denote the closed positive cone generated by $u$ and $v$.

\begin{lemma}[Cone lemma]\label{lem:cone}
Let $x,y,z\in \mathbb V\setminus\{0\}$ satisfy
\begin{align}
  0<\|x\|<\|y\|<\|z\|,                                      \\
  x\in\pos(y,z),                                            \\
  \|x-y\|>\|y\|,\qquad \|x-z\|>\|z\|.                       \label{eq:separation}
\end{align}
Then $\|x-y-z\|<\|z\|$.

\end{lemma}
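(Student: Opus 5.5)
The plan is to convert the two separation conditions in \eqref{eq:separation} into lower bounds on $\mu$-measures of arcs. These combine into a large angle between $y$ and $z$, which is then turned back into the required norm inequality via chord monotonicity and convexity.

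\emph{Step 1 (angular information).} Put $X=x/\|x\|$, $Y=y/\|y\|$, $Z=z/\|z\|$. Since $0<\|x\|<\|y\|$ and $\|x-y\|>\|y\|$, Lemma~\ref{lem:normalization} gives $\|X-Y\|>1$. The same argument with $z$ in place of $y$ gives $\|X-Z\|>1$. Lemma~\ref{lem:brass-chord} then yields
\[
\mu(\widehat{XOY})>\pi/3,\qquad \mu(\widehat{XOZ})>\pi/3 .
\]
Because $x\in\pos(y,z)$, the ray $OX$ lies in the (at most $\pi$) angle between $OY$ and $OZ$. Hence the arc from $Y$ to $Z$ through $X$ is the union of these two arcs, and $\mu(\widehat{YOZ})>2\pi/3$. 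It is convenient to note that this forces $y,z$ to be linearly independent and $Y\neq -Z$, unless $X$ lies on a semicircle, which can be handled separately. By antipodal invariance, the arc from $Z$ to $-Y$ (the complement within a semicircle) has measure $<\pi/3$. By the contrapositive of Lemma~\ref{lem:brass-chord}, this gives $\|Z+Y\|<1$; similarly, the arc from $-Z$ to $Y$ gives $\|Y+Z\|<1$.

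\emph{Step 2 (passing back to $y,z$).} Next I show $\|y+z\|<\|z\|$. Write $y+z=\|y\|(Y+Z)+(\|z\|-\|y\|)Z$. The triangle inequality together with $\|Y+Z\|<1$ gives
\[
\|y+z\|<\|y\|+\|z\|-\|y\|=\|z\| .
\]
This uses only that $\|y\|<\|z\|$.

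\emph{Step 3 (inserting $x$).} The target is $\|(y+z)-x\|<\|z\|$, i.e.\ $y+z-x$ must lie in the open ball $B$ of radius $\|z\|$ about $O$. Step 2 puts the point $y+z$ in $B$, so by convexity of $B$ it suffices to find a second point $w$ on the ray from $y+z$ in direction $-x$, beyond $y+z-x$, with $\|w\|\le\|z\|$. I would try $w$ determined by the separation conditions themselves, namely the points $y-x$ and $z-x$. Here $\|y-x\|>\|y\|$ and $\|z-x\|>\|z\|$ say that $x$ lies outside the balls centred at $y$ and $z$. Applying the chord monotonicity Lemma~\ref{lem:monotonicity} on the circle of radius $\|z\|$ (and on that of radius $\|y\|$), I would locate the direction of $z-x$ relative to $-Y$, and of $y-x$ relative to $-Z$. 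Writing $x=sy+tz$ with $s,t\ge 0$, the aim is to show that $y+z-x=(1-s)y+(1-t)z$ falls in the region already controlled by Step~2, possibly after a case split according to whether $s\le1$ and $t\le1$. The inequalities $\|x\|<\|y\|<\|z\|$ should exclude the bad cases.

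I expect Step 3 to be the main obstacle. Steps 1 and 2 follow directly from Lemmas~\ref{lem:normalization} and~\ref{lem:brass-chord}. In Step 3, by contrast, one must genuinely combine the position of $x$ inside the cone with the two separation inequalities. In the Euclidean case this is exactly where the $120^\circ$ angle estimate of Haynes and Marklof enters. My first attempt would be to reduce, via Lemma~\ref{lem:monotonicity} applied with $a=-z$ (or $a=y+z$) and with two points on a common circle, to comparing $y+z-x$ with $y$ or with $-\|z\|Y$. For those comparisons the bounds from Step 1 already give a chord of length $<\|z\|$. Strictness would come from properness of $\mu$, through the strict form of Lemma~\ref{lem:brass-chord}.
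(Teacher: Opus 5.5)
Your Steps 1 and 2 match the paper's proof: normalisation, then Lemma~\ref{lem:brass-chord} giving an arc of measure $>2\pi/3$ from $Y$ to $Z$, hence $\|Y+Z\|<1$ and $\|y+z\|<\|z\|$. The degenerate configurations you postpone are in fact excluded at once: if $x$ lay on the ray $Oy$, then $\|x-y\|=\|y\|-\|x\|<\|y\|$, and similarly for $z$.

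The genuine gap is Step 3. You leave it as a plan, and none of the concrete routes you suggest works.
\begin{itemize}
\item The points $y-x$ and $z-x$ are not on the ray $\{y+z-\lambda x:\lambda\ge0\}$, because $x,y,z$ are pairwise independent. Moreover, $\|z-x\|>\|z\|$ puts $z-x$ outside the ball in any case.
\item The case split on the coefficients in $x=sy+tz$ only settles the easy case $s,t\le1$, where $y+z-x$ lies in the parallelogram with vertices $0,y,z,y+z$. Your claim that $\|x\|<\|y\|<\|z\|$ excludes the other cases is false. For the Euclidean norm take $y=(1,0.1)$, $z=(-1.2,0.1)$, $x=(0,0.5)$. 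All hypotheses hold, yet $s,t>2$. This nearly antipodal configuration is exactly the hard case.
\item Comparing $y+z-x$ with $y$ or with $-\|z\|Y$ is not licensed by Lemma~\ref{lem:monotonicity}, which requires the two compared points to lie on a common circle. Comparing with $y$ would in any event yield only $\|z\|$, not a strict bound.
\end{itemize}

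The missing idea is to apply Lemma~\ref{lem:monotonicity} on the circle of radius $r=\|x\|$ through $x$ itself, with $a=A=y+z$. Since $OA$ lies inside $\pos(y,z)$, the ray $Ox$ lies between $OA$ and $OY$, or between $OA$ and $OZ$. This gives $\|A-x\|\le\max\{\|A-rY\|,\|A-rZ\|\}$; if $Ox=OA$, then directly $\|A-x\|=\bigl|\|A\|-r\bigr|<\|z\|$. Then use the identities
\[
A-rY=\tfrac{r}{\|y\|}\,z+\Bigl(1-\tfrac{r}{\|y\|}\Bigr)A,\qquad
A-rZ=\tfrac{r}{\|z\|}\,y+\Bigl(1-\tfrac{r}{\|z\|}\Bigr)A .
\]
Each is a convex combination, with positive weight on $A$, of $A$ (which has norm $<\|z\|$ by your Step 2) and a vector of norm $\le\|z\|$. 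Hence each has norm $<\|z\|$, which gives $\|x-y-z\|<\|z\|$.

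Note that the separation hypotheses are not used again in this step. Once $\|y+z\|<\|z\|$ is known, only $0<\|x\|<\|y\|<\|z\|$ and $x\in\pos(y,z)$ matter. Your expectation that Step 3 must ``genuinely combine'' the position of $x$ in the cone with the separation inequalities therefore points in the wrong direction.
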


\begin{proof}
Set
\[
 r=\|x\|,\qquad s=\|y\|,\qquad t=\|z\|,
 \qquad X=\frac{x}{r},\quad Y=\frac{y}{s},\quad Z=\frac{z}{t}.
\]
Applying Lemma~\ref{lem:normalization} to the pairs $(x,y)$ and $(x,z)$ gives
\begin{equation}\label{eq:normalized-separation}
  \|X-Y\|>1,\qquad \|X-Z\|>1.
\end{equation}

We first show that $y$ and $z$ are linearly independent and that $x$
is collinear with neither of them. If $x$ were on the same ray as $y$, then $r<s$ would imply $\|x-y\|=s-r<s$, contrary to \eqref{eq:separation}. The case of $z$ is
analogous. Write $x=\alpha y+\beta z$, where $\alpha,\beta\geq0$. The
preceding observation implies that $\alpha,\beta>0$. Moreover, if $y$
and $z$ were linearly dependent, then $x\in\pos(y,z)$ would lie on the
same ray as at least one of them, again a contradiction. Thus $y$ and
$z$ are linearly independent, and the ray $OX$ lies strictly between
$OY$ and $OZ$ inside $\pos(y,z)$. The corresponding arcs
$\widehat{YOX}$ and $\widehat{XOZ}$ are therefore minor arcs.

Applying Lemma~\ref{lem:brass-chord} to these two minor arcs,
\eqref{eq:normalized-separation} gives
\begin{equation}
  \mu(\widehat{YOX})>\frac{\pi}{3},\qquad
  \mu(\widehat{XOZ})>\frac{\pi}{3}.
\end{equation}
Let $\theta=\mu(\widehat{YOX})+\mu(\widehat{XOZ})$
be the measure of the arc from $Y$ to $Z$ that contains $X$. Since $x$
belongs to the positive cone, this arc is contained in a semicircle; hence
\begin{equation}
  \frac{2\pi}{3}<\theta<\pi.
\end{equation}
The measures may be added in the definition of $\theta$ because the two arcs
intersect only at $X$ and the angular measure is nonatomic. The strict 
inequality $\theta<\pi$ follows because the complementary arc in this
semicircle is nontrivial and therefore has positive measure.

The complementary arc from $Z$ to $-Y$ in the same semicircle has measure
\[
 \mu(\widehat{ZO(-Y)})=\pi-\mu(\widehat{YOX})
 -\mu(\widehat{XOZ})=\pi-\theta<\frac{\pi}{3}.
\]
Here we specifically choose the complementary arc within the semicircle from
$Y$ to $-Y$. The three arcs partition this semicircle, and their common
endpoints have measure zero. By the contrapositive of the non-strict part of
Lemma~\ref{lem:brass-chord},

\begin{equation}\label{eq:YZ-chord}
  \|Z-(-Y)\|=\|Y+Z\|<1.
\end{equation}

Set $A=y+z$. From \eqref{eq:YZ-chord},
\[
 A=sY+tZ=(t-s)Z+s(Y+Z),
\]
and consequently
\begin{equation}\label{eq:A-bound}
  \|A\|\leq(t-s)\|Z\|+s\|Y+Z\|<t.
\end{equation}

The ray $OA$ lies inside the same cone $\pos(y,z)$. The point $X$ divides the
angle between $Y$ and $Z$ into two parts. Thus either $OX$ lies between $OA$
and $OY$, or it lies between $OA$ and $OZ$ (the case $OX=OA$ is handled
directly). Applying Lemma~\ref{lem:monotonicity} on the circle of radius $r$
gives
\begin{equation}\label{eq:monotonicity-bound}
  \|A-x\|\leq
  \max\{\|A-rY\|,\|A-rZ\|\}.
\end{equation}
Here $a=A$, $b=x$, and $c=rY$ or $c=rZ$. The linear independence of $y,z$
proved above implies $A\ne0$ and $A\ne rY,rZ$. Moreover,
$\|b\|=\|c\|=r$, and the relevant ray $Ob$ lies between $Oa$ and $Oc$ in an
angle smaller than $\pi$. Thus all the hypotheses of Lemma~\ref{lem:monotonicity}  are
satisfied.

If $OX=OA$, then \eqref{eq:A-bound} and $r<t$ immediately give
$\|A-x\|=|\|A\|-r|<t$.

It remains to estimate the right-hand side of
\eqref{eq:monotonicity-bound}. We have
\begin{align*}
 A-rY&=\frac r s z+\left(1-\frac r s\right)A,\\
 A-rZ&=\frac r t y+\left(1-\frac r t\right)A.
\end{align*}
Using $s<t$, \eqref{eq:A-bound}, and $0<r<s<t$, we obtain
\begin{align*}
 \|A-rY\|
 &\leq \frac r s\,t+\left(1-\frac r s\right)\|A\|<t,\\
 \|A-rZ\|
 &\leq \frac r t\,s+\left(1-\frac r t\right)\|A\|<t.
\end{align*}
Together with \eqref{eq:monotonicity-bound}, this gives
\[
  \|x-y-z\|=\|A-x\|<t=\|z\|.
\]
\end{proof}

\begin{remark}
Figure~\ref{fig:drawing} provides a geometric interpretation of the
estimate following the application of Lemma~\ref{lem:monotonicity}.

That lemma gives
$\|A-x\|\leq\left\|A-\frac{r}{s}y\right\|$.

\begin{figure}[htbp]
    \centering
    \begin{tikzpicture}[
    scale=2.1,
    point/.style={fill=cyan!20!black},
    every node/.append style={text=cyan!20!black}
]

    \coordinate (O) at (0,0);
    \def\rsmall{1.2}
    \def\rbig{2.4}
    \def\angle{138}

    \draw[cyan!50!black, very thick]
        (O) circle (\rsmall);

    \draw[magenta!40!black, very thick, dashed]
        (O) circle (\rbig);

    \draw[cyan!40!black, thick]
        (O) -- (3.2,0);

    \coordinate (C) at (\rsmall,0);
    \fill[point] (C) circle (0.7pt)
        node[below right] {$\boldsymbol{\frac{r}{s}y}$};

    \coordinate (y) at (2.2,0);
    \fill[point] (y) circle (0.7pt)
        node[below] {$\boldsymbol{y}$};

    \draw[cyan!40!black, thick]
        (O) -- (\angle:3.8);

    \coordinate (B) at (\angle:\rsmall);
    \coordinate (z) at (\angle:\rbig);

    \coordinate (x) at (62:\rsmall);
    \fill[point] (x) circle (0.7pt)
        node[below, cyan!15!black, inner sep=6pt] {$\boldsymbol{x}$};

    \fill[point] (B) circle (0.7pt)
        node[left] {$\boldsymbol{\frac{r}{t}z}$};

    \fill[point] (z) circle (0.7pt)
        node[below, inner sep=6pt] {$\boldsymbol{z}$};

    \fill[point] (O) circle (0.7pt)
        node[below right] {$\boldsymbol{O}$};

    \coordinate (A) at ($(y)+(z)-(O)$);
    \fill[point] (A) circle (0.7pt)
        node[right, inner sep=6pt] {$\boldsymbol{A}$};

    \path
        let
            \p1 = ($(C)-(O)$),
            \p2 = ($(y)-(C)$)
        in
            \pgfextra{
                \pgfmathsetmacro{\tempRatio}{
                    veclen(\x1,\y1) /
                    (veclen(\x1,\y1) + veclen(\x2,\y2))
                }
                \global\let\divisionRatio\tempRatio
            };

    \coordinate (D) at
        ($(O)!\divisionRatio!(A)$);

    \fill[point] (D) circle (0.7pt)
        node[left] {$\boldsymbol{\frac{r}{s}A}$};

    \draw[dashed, magenta!60!black]
        (A) -- (z);

    \draw[dashed, magenta!60!black]
        (A) -- (y)
        node[midway, above, magenta!30!black] {$\boldsymbol{z}$};

    \draw[dashed, thick, cyan!40!black]
        (O) -- (A);

    \draw[cyan!40!black, thick]
        (O) -- (x);

    \draw[magenta!60!black, very thick]
        (A) -- (x);

    \draw[magenta!60!black, very thick]
        (C) -- (D)
        node[midway, below left, inner sep=1pt, magenta!30!black]
        {$\boldsymbol{\frac{r}{s}z}$};

    \draw[
    magenta!60!black,
    very thick,
    line join=round
]
    (C) -- (D) -- (A) -- cycle;

    \fill[
    magenta!60!black,
    fill opacity=0.2,
    rounded corners=2pt
]
    (C) -- (D) -- (A) -- cycle;
\end{tikzpicture}
    \caption{Illustration of the proof of the Cone lemma in the case when the ray $OX$ lies between $OA$ and $OY$}
    \label{fig:drawing}
\end{figure}

The remaining estimate is simply the triangle inequality applied to
the shaded triangle in Figure~\ref{fig:drawing}. Its vertices
are $A$, $\frac{r}{s}y$, and $\frac{r}{s}A$.
The side opposite $A$ is parallel to $z$, since
\[
    \frac{r}{s}A-\frac{r}{s}y
    =\frac{r}{s}(A-y)
    =\frac{r}{s}z,
\]
and therefore its length is $(r/s)t$. The other side lies on the ray $OA$ and has length
$\left\|A-\frac{r}{s}A\right\|
    =\left(1-\frac{r}{s}\right)\|A\|$.
Consequently,
\[
\begin{aligned}
    \|A-x\|
    &\leq
    \left\|A-\frac{r}{s}y\right\|\\
    &\leq
    \left\|A-\frac{r}{s}A\right\|
    +
    \left\|\frac{r}{s}A-\frac{r}{s}y\right\|\\
    &=
    \left(1-\frac{r}{s}\right)\|A\|
    +\frac{r}{s}t
    <t,
\end{aligned}
\]
where the final inequality follows from $\|A\|<t$ and $0<r/s<1$.

The case in which $OX$ lies between $OA$ and $OZ$ is analogous:
one uses the triangle with vertices
$A$, $\frac{r}{t}z$, and $\frac{r}{t}A$,
whose side opposite $A$ is parallel to $y$.
\end{remark}

\section{Proof of Theorem~\ref{thm:main}}

We divide the proof into two parts. First, assuming that the norm is
strictly convex, we associate a lattice with the finite Kronecker
sequence and bound the number of values attained by the corresponding
lattice minimum function. This part follows the lattice-theoretic
argument of Haynes and Marklof, with the necessary geometric input
supplied by the preceding section. We then remove the strict convexity
assumption by perturbing an arbitrary norm with a Euclidean component
and passing to the limit.

\subsection{The strictly convex case}

Throughout this subsection, the norm $\|\cdot\|$ is assumed to be
strictly convex. Since the norm is fixed, we suppress it from the
notation and write
$\delta_{n,N}:=\delta_{n,N}^{\|\cdot\|}$ and
$g_N:=g_N^{\|\cdot\|}(\boldsymbol{\alpha},L)$.

Setting $k=m-n$ in \eqref{eq:delta-definition} and using the central
symmetry of the norm, we obtain
\begin{equation}\label{eq:delta-original}
\delta_{n,N}
=
\min\bigl\{
  \|k\boldsymbol{\alpha}+\ell\|:
  -n<k\leq N-n,\ \ell\in L,\
  k\boldsymbol{\alpha}+\ell\ne0
\bigr\}.
\end{equation}

Write $\widehat N=N+\frac12$ and set
\begin{equation}
\Lambda_N=
\left\{
\left(
  \frac{k}{\widehat N},
  k\boldsymbol{\alpha}+\ell
\right):
k\in\mathbb Z,\ \ell\in L
\right\}
\subset\R\times\R^2.
\end{equation}
Since both $\widehat N^{-1}\mathbb Z\subset\R$ and $L\subset\R^2$ are
full-rank lattices, so is their product. The set $\Lambda_N$ is
obtained from this product by a linear shear and is therefore
also full-rank.

For $0<t<1$, define
\begin{align*}
 Q_{\Lambda_N}(t)
   &=\{(u,v)\in\Lambda_N:-t<u<1-t,\ v\ne0\},\\
 F_{\Lambda_N}(t)
   &=\min\{\|v\|:(u,v)\in Q_{\Lambda_N}(t)\}.
\end{align*}
Since $\Lambda_N$ is fixed throughout the remainder of this subsection,
we write $Q(t)=Q_{\Lambda_N}(t)$ and $F(t)=F_{\Lambda_N}(t)$.

Choose a nonzero $\ell_0\in L$ and put $C=\|\ell_0\|$. Since
$(0,\ell_0)\in Q(t)$ for every $t\in(0,1)$, it is enough to
minimise over the points of $Q(t)$ whose spatial components
have norm at most $C$. These points form a nonempty finite set, since
they belong to the compact set
$\{(u,v):|u|\leq1,\ \|v\|\leq C\}$. Thus $F(t)$ is well
defined and positive for every $t\in(0,1)$. Moreover, all the values of $F$ are obtained from this fixed finite set,
so $\{F(t):0<t<1\}$ is finite. Set $K:=\#\{F(t):0<t<1\}$.

\begin{lemma}\label{lem:exact-reduction}
For every $1\leq n\leq N$,
\begin{equation}\label{eq:exact-delta-F}
\delta_{n,N}
=
F\left(\frac{n}{\widehat N}\right).
\end{equation}
Consequently,
\begin{equation}\label{eq:g-le-K}
g_N
=
\#\left\{
F\left(\frac{n}{\widehat N}\right):
1\leq n\leq N
\right\}
\leq K.
\end{equation}
\end{lemma}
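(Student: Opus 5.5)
The plan is to compare the two minimisation problems index by index. With $t=n/\widehat N$, the set $Q(t)$ consists of the points $(k/\widehat N,\,k\boldsymbol{\alpha}+\ell)$ with $\ell\in L$, $k\boldsymbol{\alpha}+\ell\neq0$, and
\[
-\frac{n}{\widehat N}<\frac{k}{\widehat N}<1-\frac{n}{\widehat N}.
\]
Multiplying by $\widehat N>0$, the condition on $k$ becomes $-n<k<\widehat N-n=N-n+\tfrac12$. Since $k$ is an integer, this is equivalent to $-n<k\leq N-n$, which is exactly the constraint in \eqref{eq:delta-original}. This is precisely why the half-integer shift $\widehat N=N+\frac12$ was chosen: it makes the open upper endpoint include $k=N-n$ without admitting $k=N-n+1$.

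Next I will check that the map $(k,\ell)\mapsto (k/\widehat N,k\boldsymbol{\alpha}+\ell)$ is well suited to comparing the sets of values. It is clearly surjective onto $\Lambda_N$. Injectivity is not actually needed, because both sides are minima of $\|k\boldsymbol{\alpha}+\ell\|$ over the same set of admissible pairs $(k,\ell)$: the spatial component is $v=k\boldsymbol{\alpha}+\ell$, and the condition $v\neq0$ coincides with $k\boldsymbol{\alpha}+\ell\neq0$. Hence the set of values $\{\|v\|:(u,v)\in Q(n/\widehat N)\}$ equals the set of values minimised in \eqref{eq:delta-original}, and the two minima agree. Existence of the minimum on the left was already shown via the finiteness argument above; existence on the right is given by the definition (or follows from the equality of the value sets). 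Also $0<n/\widehat N<1$ for $1\leq n\leq N$, so $F$ is defined there.

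Finally, \eqref{eq:g-le-K} follows immediately. We have $g_N=\#\{\delta_{n,N}\}=\#\{F(n/\widehat N)\}$, and this set is a subset of $\{F(t):0<t<1\}$, which has cardinality $K$.

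I expect no real obstacle here. The only point that needs care is the endpoint bookkeeping for the strict inequalities in $Q(t)$ versus the integer range of $k$, and I will spell that step out explicitly.
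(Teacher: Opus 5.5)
Your proposal is correct and follows the paper's proof: with $t=n/\widehat N$ the window condition becomes $-n<k<N-n+\frac12$, equivalently $-n<k\leq N-n$, so the spatial components of $Q(n/\widehat N)$ are exactly the admissible vectors in \eqref{eq:delta-original}, and $g_N\leq K$ follows by counting values. Your side checks, that $0<n/\widehat N<1$ and that the half-integer shift handles the open endpoint, are accurate.
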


\begin{proof}
Let $t_n=n/\widehat N$. A point
$(k/\widehat N,k\boldsymbol{\alpha}+\ell)\in\Lambda_N$ belongs to
$Q(t_n)$ precisely when
$-n<k<N-n+\frac12$ and
$k\boldsymbol{\alpha}+\ell\ne0$. Since $k$ is an integer, the first
condition is equivalent to $-n<k\leq N-n$. Thus the spatial components
of the points in $Q(t_n)$ are exactly the admissible vectors
in \eqref{eq:delta-original}, which proves \eqref{eq:exact-delta-F}.
Taking the number of distinct values gives \eqref{eq:g-le-K}.
\end{proof}

Choose $(u_i,v_i)\in\Lambda_N$, $1\leq i\leq K$, so that
\begin{enumerate}
 \item $0<\|v_1\|<\cdots<\|v_K\|$;
 \item the set $\{\|v_i\|:1\leq i\leq K\}$ is precisely the range of $F$;
 \item for every $i$ there exists $t_i\in(0,1)$ such that
 $(u_i,v_i)\in Q(t_i)$ and $F(t_i)=\|v_i\|$.
\end{enumerate}
Since $Q(1-t)=-Q(t)$, we have $F(1-t)=F(t)$. Hence, replacing
$(u_i,v_i)$ by $(-u_i,-v_i)$ and $t_i$ by $1-t_i$ when necessary, we may
assume that $u_i\geq0$.

\begin{lemma}\label{lem:lattice-monotonicity}
If $(u,v)\in\Lambda_N$, $v\ne0$, and
$0\leq u\leq u_i<1$,
then $\|v_i\|\leq\|v\|$.
\end{lemma}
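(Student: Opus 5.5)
The plan is to show that $(u,v)$ belongs to $Q(t_i)$. Once this is established, the definition of $F$ as a minimum gives $\|v\|\geq F(t_i)=\|v_i\|$ by property~3 of the chosen points $(u_i,v_i)$. Since $v\neq0$ by hypothesis, the only thing to check is the window condition $-t_i<u<1-t_i$.

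First, I record what $(u_i,v_i)\in Q(t_i)$ gives: $-t_i<u_i<1-t_i$. Next, I derive the two inequalities for $u$ from the chain $0\leq u\leq u_i$. For the lower bound, $u\geq 0>-t_i$, because $t_i\in(0,1)$. For the upper bound, $u\leq u_i<1-t_i$. Together these place $u$ in the open interval $(-t_i,1-t_i)$, so $(u,v)\in Q(t_i)$. The conclusion then follows from the minimality of $F(t_i)$.

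I do not expect any real obstacle here. The normalisation $u_i\geq0$, achieved earlier via the symmetry $Q(1-t)=-Q(t)$, is exactly what makes the interval $[0,u_i]$ lie inside the window for $t_i$. The one point I will state explicitly is that the lower endpoint $u=0$ is admissible: it is, because $t_i>0$ makes the inequality $-t_i<0$ strict. The hypothesis $u_i<1$ is not needed beyond its consequence $u_i<1-t_i$, which already holds.
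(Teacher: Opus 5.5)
Your proposal is correct and follows the paper's proof essentially verbatim: you show $(u,v)\in Q(t_i)$ using $u\le u_i<1-t_i$ and then invoke $F(t_i)=\|v_i\|$. The only difference is that you also check the lower bound $-t_i<0\le u$ explicitly, which the paper leaves implicit.
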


\begin{proof}
Choose $t_i$ as in the third condition. Since $(u_i,v_i)\in Q(t_i)$ and
$u_i\geq0$, we have $t_i<1-u_i\leq1-u$, so $(u,v)\in Q(t_i)$. Therefore
$\|v_i\|=F(t_i)\leq\|v\|$.
\end{proof}

\begin{lemma}\label{lem:small-u}
If $(u_i,v_i)$ is one of the chosen representatives and
$0\leq u_i<1/2$, then $i=K$.
\end{lemma}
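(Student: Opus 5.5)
We show that if $0\le u_i<1/2$ then $\|v_i\|$ is already the largest value of $F$. The idea is that a representative with small time coordinate is admissible at every time $t$ in a fixed interval containing the central point $t=1/2$. Hence $F(t)\le\|v_i\|$ on that interval. Lemma~\ref{lem:lattice-monotonicity} and the symmetry $F(1-t)=F(t)$ will then spread this bound to all of $(0,1)$.

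\textbf{Step 1: bound $F$ near the centre.} Since $0\le u_i<1/2$, the point $(u_i,v_i)$ lies in $Q(t)$ for every $t\in(0,1-u_i)$. Similarly, $(-u_i,-v_i)$ lies in $Q(t)$ for every $t\in(u_i,1)$. As $u_i<1/2<1-u_i$, these two ranges together cover all of $(0,1)$. Hence $F(t)\le\|v_i\|$ for every $t\in(0,1)$.

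\textbf{Step 2: conclude.} Since the range of $F$ is exactly $\{\|v_1\|,\dots,\|v_K\|\}$, Step 1 gives $\|v_K\|\le\|v_i\|$. The strict ordering $\|v_1\|<\cdots<\|v_K\|$ then forces $i=K$. This argument uses only the definitions of $Q$ and $F$. Lemma~\ref{lem:lattice-monotonicity} is not even needed, although it is consistent with the conclusion.

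\textbf{Main obstacle: the boundary conditions.} The definition of $Q(t)$ uses strict inequalities $-t<u<1-t$, so the endpoints need care. For $(u_i,v_i)$ with $u_i\ge0$, membership in $Q(t)$ requires $-t<u_i$, which holds for all $t>0$, and $u_i<1-t$, i.e.\ $t<1-u_i$. For $(-u_i,-v_i)$, membership requires $-t<-u_i$, i.e.\ $t>u_i$, and $-u_i<1-t$, which holds for all $t<1$ since $u_i\ge 0$.

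The two open intervals $(0,1-u_i)$ and $(u_i,1)$ cover $(0,1)$ precisely because $u_i<1-u_i$, which is where the strict hypothesis $u_i<1/2$ enters. If $u_i=1/2$, the point $t=1/2$ would be missed. Finally, $v_i\ne0$ holds because $\|v_i\|>0$, so both points are genuinely admissible.
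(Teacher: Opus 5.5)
Your proof is correct and is essentially the paper's argument: $(u_i,v_i)\in Q(t)$ for $t\in(0,1-u_i)$ and $(-u_i,-v_i)\in Q(t)$ for $t\in(u_i,1)$, and these intervals cover $(0,1)$ because $u_i<1/2$, so $F\le\|v_i\|$ everywhere and hence $\|v_i\|=\|v_K\|$. As you yourself note in Step~2, the opening plan to invoke Lemma~\ref{lem:lattice-monotonicity} and the symmetry $F(1-t)=F(t)$ is unnecessary and can be dropped.
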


\begin{proof}
For $0<t<1-u_i$, the vector $(u_i,v_i)$ belongs to $Q(t)$, whereas for
$u_i<t<1$, the vector $(-u_i,-v_i)$ belongs to $Q(t)$. Since $u_i<1/2$,
these two intervals cover $(0,1)$. Hence $F(t)\leq\|v_i\|$ for every
$t\in(0,1)$. Thus $\|v_i\|$ is the largest value attained by $F$, and so
$i=K$.
\end{proof}

\begin{proposition}\label{prop:lattice-inequalities}
For all $1\leq i<j\leq K-1$, one has
\begin{equation}\label{eq:representative-pairwise}
 \|v_i-v_j\|>\|v_j\|,
\end{equation}
and for all $1\leq i<j<k\leq K-1$, one has
\begin{equation}\label{eq:representative-triple}
 \|v_i-v_j-v_k\|\geq\|v_k\|.
\end{equation}
\end{proposition}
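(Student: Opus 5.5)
By Lemma~\ref{lem:small-u}, every representative with index at most $K-1$ satisfies $1/2\le u_i<1$. The plan is to form differences and sums of representatives, which are again points of $\Lambda_N$ because it is a lattice. These new points have first coordinate in $[0,u_j]$ or $[0,u_k]$, so Lemma~\ref{lem:lattice-monotonicity} gives a lower bound on the norm of their spatial parts. The extra information needed is an ordering of the $u_i$, which I would establish first.

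\emph{Step 1: the $u_i$ increase with $i$ for $i\le K-1$.} Take $i<j\le K-1$ and suppose $u_j\le u_i$. Both $u_i$ and $u_j$ lie in $[1/2,1)$. If $v_j\neq 0$, then $(u_j,v_j)$ satisfies $0\le u_j\le u_i<1$, so Lemma~\ref{lem:lattice-monotonicity} applied with index $i$ gives $\|v_i\|\le\|v_j\|$. This is consistent with $i<j$ and so gives nothing. I would therefore swap the roles: apply Lemma~\ref{lem:lattice-monotonicity} with index $j$ to the point $(u_i,v_i)$. This requires $u_i\le u_j$, so it yields $\|v_j\|\le\|v_i\|$ when $u_i\le u_j$, contradicting $j>i$. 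Hence $u_i>u_j$ whenever $i<j$; that is, the $u_i$ \emph{decrease} in $i$, with strict inequality.

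\emph{Step 2: the pairwise inequality~\eqref{eq:representative-pairwise}.} Consider $(u_i-u_j,\,v_i-v_j)\in\Lambda_N$. By Step~1 we have $0<u_i-u_j\le 1/2\le u_j$, and $v_i\ne v_j$ since $\|v_i\|<\|v_j\|$. Lemma~\ref{lem:lattice-monotonicity} with index $j$ then gives $\|v_i-v_j\|\ge\|v_j\|$. To get the strict inequality I would use strict convexity together with the minimality of the representatives. Suppose equality holds. Then the point $(u_i-u_j,\,v_i-v_j)$ lies in $Q(t_j)$ and attains the value $\|v_j\|$. Because $u_i-u_j\le 1/2$, the argument of Lemma~\ref{lem:small-u} shows that this value is the largest one, i.e.\ $\|v_j\|=F$-maximum, which forces $j=K$. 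This contradicts $j\le K-1$. Note that Lemma~\ref{lem:small-u} needs $u<1/2$ strictly. The boundary case $u_i-u_j=1/2$ should be impossible, because $\widehat N=N+\tfrac12$ makes the first coordinates equal to $k/\widehat N$ with $k$ an integer, and such a number never equals $1/2$. If it were $0$, then the two points would have equal first coordinates and $v_i-v_j\in L$; I expect that to be handled the same way, using $(0,v_i-v_j)\in Q(t)$ for all $t$.

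\emph{Step 3: the triple inequality~\eqref{eq:representative-triple}.} Consider $(u_i-u_j-u_k+1,\,\ast)$. The point $(1,\ell)$ need not lie in $\Lambda_N$, since $1=\widehat N/\widehat N$ requires $k=\widehat N$, which is not an integer. So instead I would use $(u_i-u_j-u_k,\,v_i-v_j-v_k)$ and its negative $(u_j+u_k-u_i,\,v_j+v_k-v_i)$. The first coordinate of the negative lies in $(0,u_k]$, because $u_j-u_i<0$ and $u_k\ge1/2>u_i-u_j$ (using Step~1's $u_i>u_j$). Lemma~\ref{lem:lattice-monotonicity} with index $k$ then gives $\|v_i-v_j-v_k\|\ge\|v_k\|$ provided the spatial part is nonzero. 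If it were zero, then $v_k=v_i-v_j$, and Step~2 would give $\|v_k\|>\|v_j\|$, which is consistent. In that case the first coordinate $u_i-u_j-u_k$ would have to be a nonzero multiple related to $L$, and I would need to rule this out using the nonzero-spatial condition. This degenerate case and the strictness issues in Step~2 are where I expect the main difficulty; the geometric content (the Cone lemma) enters only later, when these inequalities are combined with Lemma~\ref{lem:cone} to bound $K\le 5$.
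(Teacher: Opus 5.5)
Your Steps 1 and 2 follow the paper's argument. You obtain the ordering $u_i>u_j$ for $i<j\le K-1$; your heading says ``increase'', but the conclusion you reach is the correct one. You then apply Lemma~\ref{lem:lattice-monotonicity} to $(u_i-u_j,v_i-v_j)$, and get strictness from the covering argument of Lemma~\ref{lem:small-u}. Strict convexity plays no role there. Your worries about $u_i-u_j\in\{0,\tfrac12\}$ are also unnecessary: Step 1 already gives $u_i-u_j>0$, and $u_i<1$, $u_j\ge\tfrac12$ give $u_i-u_j<\tfrac12$ directly.

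The genuine gap is in Step 3. Lemma~\ref{lem:lattice-monotonicity} applies only to points with nonzero spatial part, so you must show $v_j+v_k-v_i\ne0$, and you leave this open. You tested $v_k=v_i-v_j$ against the pairwise inequality for the pair $(i,j)$, which is indeed consistent and gives nothing. The missing step is to use the pair $(i,k)$ instead. If $v_i=v_j+v_k$, then \eqref{eq:representative-pairwise} for $(i,k)$ gives $\|v_j\|=\|v_i-v_k\|>\|v_k\|$, contradicting $\|v_j\|<\|v_k\|$. This is exactly how the paper disposes of the case.

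Your suggested alternative, excluding the case through the first coordinate $u_i-u_j-u_k$, cannot work on its own. If $q\boldsymbol\alpha\in L$ for some positive integer $q$, then $(q/\widehat N,0)\in\Lambda_N$, so points of $\Lambda_N$ with zero spatial part and nonzero first coordinate genuinely occur. With the one-line fix above, the rest of your Step 3 is correct and coincides with the paper's proof: the first coordinate lies in $(0,u_k)$, you apply Lemma~\ref{lem:lattice-monotonicity} with index $k$, and central symmetry finishes.
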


\begin{proof}
Lemma~\ref{lem:small-u} gives $u_i\geq1/2$ for $i<K$. We also claim that
$u_i>u_j$ whenever $i<j<K$. If $u_i\leq u_j$,
Lemma~\ref{lem:lattice-monotonicity} gives
$\|v_i\|\geq\|v_j\|$, contrary to
$\|v_i\|<\|v_j\|$. Thus
$0<u_i-u_j<\frac12\leq u_j$.
Applying Lemma~\ref{lem:lattice-monotonicity} to the lattice vector
$(u_i-u_j,v_i-v_j)$ and the representative $(u_j,v_j)$ gives
\begin{equation}\label{eq:pairwise-nonstrict}
 \|v_i-v_j\|\geq\|v_j\|.
\end{equation}
If equality holds in \eqref{eq:pairwise-nonstrict}, then the vector
$(u_i-u_j,v_i-v_j)$ and its
negative show, exactly as in the proof of Lemma~\ref{lem:small-u}, that
$F(t)\leq\|v_j\|$ for all $t\in(0,1)$. This would imply $j=K$, which is
impossible. This proves \eqref{eq:representative-pairwise}.

Now let $i<j<k\leq K-1$. The ordering of the $u$-coordinates established
above implies
\begin{equation}\label{eq:time-triple}
 \frac12\leq u_k<u_j<u_i<1,
 \qquad 0<u_k+u_j-u_i<u_k.
\end{equation}
The spatial component $v_k+v_j-v_i$ is nonzero. Indeed, otherwise
$v_i=v_j+v_k$, and \eqref{eq:representative-pairwise}, applied to the
pair $(i,k)$, would give
$\|v_j\|=\|v_i-v_k\|>\|v_k\|$,
contrary to $j<k$. Hence
$(u_k+u_j-u_i,v_k+v_j-v_i)\in\Lambda_N$ is an admissible lattice vector.
From \eqref{eq:time-triple} and
Lemma~\ref{lem:lattice-monotonicity}, applied
to the representative $(u_k,v_k)$, we obtain
$\|v_k+v_j-v_i\|\geq\|v_k\|$.
The central symmetry of the norm yields
\eqref{eq:representative-triple}.
\end{proof}

\begin{proposition}\label{prop:exclusion}
Let nonzero vectors $v_1,\dots,v_m$ satisfy
$\|v_1\|<\|v_2\|<\cdots<\|v_m\|$
and suppose that, for all applicable indices,
\begin{align}
  \|v_i-v_j\|&>\|v_j\| &&(i<j),                         \label{eq:pairwise}\\
  \|v_i-v_j-v_k\|&\geq\|v_k\| &&(i<j<k).               \label{eq:triple}
\end{align}
Then, whenever $i<j<k$, the vector $v_i$ does not belong to the cone
$\pos(v_j,v_k)$.
\end{proposition}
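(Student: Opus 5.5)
This is a direct application of the Cone lemma (Lemma~\ref{lem:cone}), argued by contradiction. We keep the standing assumption of this subsection that $\|\cdot\|$ is strictly convex, which the Cone lemma requires. Fix $i<j<k$ and suppose that $v_i\in\pos(v_j,v_k)$. We then set
\[
x=v_i,\qquad y=v_j,\qquad z=v_k .
\]

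First we verify the hypotheses of Lemma~\ref{lem:cone} one by one.

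\begin{itemize}
\item The vectors are nonzero, and the strict ordering $\|v_1\|<\cdots<\|v_m\|$ gives $0<\|x\|<\|y\|<\|z\|$.
\item The cone condition $x\in\pos(y,z)$ is exactly our assumption.
\item The separation conditions \eqref{eq:separation} are the pairwise hypothesis \eqref{eq:pairwise} applied to the pairs $(i,j)$ and $(i,k)$. These give $\|v_i-v_j\|>\|v_j\|$ and $\|v_i-v_k\|>\|v_k\|$, which are precisely $\|x-y\|>\|y\|$ and $\|x-z\|>\|z\|$.
\end{itemize}

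The Cone lemma then yields $\|x-y-z\|<\|z\|$, that is, $\|v_i-v_j-v_k\|<\|v_k\|$. This contradicts the triple hypothesis \eqref{eq:triple} for the indices $i<j<k$. Hence $v_i\notin\pos(v_j,v_k)$.

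There is no genuine obstacle here; all the geometric work was done in Lemma~\ref{lem:cone}. The only points to check are that the index roles match: the smallest vector $v_i$ plays the role of $x$, and $v_k$ is the largest. It is also worth noting that the strict pairwise inequality, rather than a non-strict one, is what the Cone lemma needs, which is exactly why Proposition~\ref{prop:lattice-inequalities} establishes \eqref{eq:representative-pairwise} with strict inequality.
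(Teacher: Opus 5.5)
Your proposal is correct and is essentially the paper's proof: suppose $v_i\in\pos(v_j,v_k)$, apply the Cone lemma with $x=v_i$, $y=v_j$, $z=v_k$, and contradict the triple inequality. The paper states this in one line; your explicit check of the norm ordering, the separation conditions, and the standing strict-convexity assumption just spells out what the paper leaves implicit.
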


\begin{proof}
If $v_i\in\pos(v_j,v_k)$, then Lemma~\ref{lem:cone}, applied with
$x=v_i$, $y=v_j$, and $z=v_k$, would give
$\|v_i-v_j-v_k\|<\|v_k\|$,
contrary to \eqref{eq:triple}.
\end{proof}

\begin{lemma}\label{lem:five-vectors}
Under the hypotheses of Proposition~\ref{prop:exclusion}, the number of
vectors is at most four: $m\leq4$.
\end{lemma}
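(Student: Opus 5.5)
Suppose, for contradiction, that there are five vectors $v_1,\dots,v_5$ satisfying the hypotheses. The plan is to combine Proposition~\ref{prop:exclusion} with an angular counting argument on the unit circle, in the spirit of Haynes and Marklof. First, all directions are distinct and no two vectors are antipodal. If $v_i$ and $v_j$ ($i<j$) point in the same direction, then $\|v_i-v_j\|=\|v_j\|-\|v_i\|<\|v_j\|$, contradicting \eqref{eq:pairwise}. If $v_j=-\lambda v_i$ with $\lambda>0$, then $v_i\in\pos(v_j,v_k)$ fails only in special configurations, so antipodal pairs must be handled with care; I would treat them directly inside the cone argument below.

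The key combinatorial claim is that $v_1$ does not lie in $\pos(v_j,v_k)$ for any $1<j<k$. Hence the directions of $v_2,\dots,v_m$, read off from the direction of $v_1$, all lie in an open half-plane bounded by the line through $v_1$, or else lie in a configuration where every cone they span avoids $v_1$. Concretely, consider the rays $Ov_2,\dots,Ov_m$. If two of them were on opposite sides of the line $\R v_1$, with the angle between them containing $Ov_1$ and being less than $\pi$, then $v_1$ would belong to their cone, which is excluded. Thus the rays on the two sides of $Ov_1$ are separated by an angle of at least $\pi$ through $Ov_1$. Equivalently, all of $v_2,\dots,v_m$ lie in a closed half-plane whose boundary passes through $O$ and which excludes the ray $Ov_1$ from the interior of their angular span.

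Next, apply the same reasoning to $v_2$ relative to $v_3,\dots,v_m$, and so on. Recursively, each $v_i$ lies outside the convex angular hull of $\{v_{i+1},\dots,v_m\}$, provided that this hull is less than $\pi$. To get a contradiction with $m=5$, I would bound the number of vectors by angular separation. Lemma~\ref{lem:normalization} together with Lemma~\ref{lem:brass-chord} shows that any two normalised directions $V_i,V_j$ are separated by Brass measure greater than $\pi/3$. On a circle of total measure $2\pi$, this already bounds $m\le 5$, since six directions would need measure greater than $2\pi$. To exclude $m=5$, I would argue as follows. Five directions with pairwise consecutive gaps greater than $\pi/3$ leave total slack less than $\pi/3$, so every consecutive gap is less than $2\pi/3$, and hence less than $\pi$. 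Moreover, for each vector, the two neighbours on the circle together with the vector span an angle less than $\pi$, because the sum of two consecutive gaps is less than $2\pi-3\cdot\pi/3=\pi$. Take $i=\min$ index $1$: its two circular neighbours $v_j,v_k$ have indices greater than $1$ and span an angle less than $\pi$ that contains $v_1$, so $v_1\in\pos(v_j,v_k)$, contradicting Proposition~\ref{prop:exclusion}.

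The main obstacle is making the circular-gap argument rigorous with the Brass measure. I need the gaps between consecutive directions to be measured along minor arcs, and I need to check that "two consecutive gaps sum to less than $\pi$ in measure" really implies that the Euclidean angle is less than $\pi$. The latter follows because a semicircle has measure exactly $\pi$, and because $\mu$ is proper, so arcs of measure less than $\pi$ are strictly contained in a semicircle. Strict inequalities from Lemma~\ref{lem:brass-chord} (via \eqref{eq:pairwise} and Lemma~\ref{lem:normalization}) give strictness, and the degenerate same-direction case was already excluded at the start.
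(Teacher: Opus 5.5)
Your final paragraph is the paper's argument: every elementary arc between cyclically consecutive normalised directions has Brass measure $>\pi/3$, so the two arcs adjacent to $a_1$ sum to less than $\pi$, which forces $v_1\in\pos(v_j,v_k)$ with $1<j<k$ and contradicts Proposition~\ref{prop:exclusion}; the half-plane and recursive discussion before it is not needed. The one point you flag but leave open (that the elementary arcs must be minor for the $>\pi/3$ bound to apply) is settled as in the paper, without any circularity: an elementary arc that is not a minor arc contains a semicircle, so its measure is at least $\pi>\pi/3$, and the bound therefore holds for every elementary arc.
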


\begin{proof}
Suppose, to the contrary, that $m\geq5$. Consider the first five vectors
and set $a_i=\frac{v_i}{\|v_i\|}$, $1\leq i\leq5$.
For $i<j$, \eqref{eq:pairwise} and Lemma~\ref{lem:normalization} imply
\begin{equation}\label{eq:normalized-pairwise}
  \|a_i-a_j\|>1.
\end{equation}

For each pair of cyclically consecutive points, the corresponding
elementary arc is a minor arc, a semicircle, or a major arc. In the first
case, \eqref{eq:normalized-pairwise} and
Lemma~\ref{lem:brass-chord} imply that its measure is strictly greater
than $\pi/3$; in the other two cases, its measure is at least $\pi$.
Hence every elementary arc between consecutive points has measure
strictly greater than $\pi/3$.

Let $a_j$ and $a_k$ be the two cyclic neighbours of $a_1$. The three
elementary arcs forming the arc from $a_j$ to $a_k$ that does not contain
$a_1$ have total measure strictly greater than $\pi$. Their complementary
arc, which contains $a_1$, therefore has measure strictly less than $\pi$
and is the minor arc from $a_j$ to $a_k$. Hence the ray $Ov_1$ lies in
the positive cone bounded by the rays $Ov_j$ and $Ov_k$:
$v_1\in\pos(v_j,v_k)$.
After interchanging $j$ and $k$ if necessary, we may assume that $1<j<k$.
This contradicts Proposition~\ref{prop:exclusion}. Therefore $m\leq4$.
\end{proof}

Proposition~\ref{prop:lattice-inequalities} and
Lemma~\ref{lem:five-vectors} give $K-1\leq4$, and hence $K\leq5$.
Together with Lemma~\ref{lem:exact-reduction}, this yields
$g_N\leq K\leq5$. This proves
Theorem~\ref{thm:main} under the additional assumption that
$\|\cdot\|$ is strictly convex.

\subsection{Removing the strict convexity assumption}

It remains to remove the strict convexity assumption. To this end, we
perturb an arbitrary norm by adding a small Euclidean term. The resulting
norm is strictly convex, and as the perturbation tends to zero, the
corresponding nearest-neighbour distances converge to those for the
original norm; consequently, six distinct distances for the original norm would remain distinct under every sufficiently small perturbation.

Let $\|\cdot\|$ be an arbitrary norm on $\mathbb R^2$. For each $\varepsilon>0$, define a norm by $\|x\|_\varepsilon
\coloneqq \|x\|+\varepsilon |x|_2$, where $|\cdot|_2$ denotes the Euclidean norm.

\begin{lemma}
$\|x\|_\varepsilon$ is strictly convex.
\end{lemma}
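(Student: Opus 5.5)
First we check that $\|\cdot\|_\varepsilon$ is a norm; this is immediate, since it is a sum of a norm and a positive multiple of a norm, so positivity, homogeneity and the triangle inequality all hold termwise. For strict convexity we use the definition recalled in Section~2: the unit circle $S_\varepsilon=\{x:\|x\|_\varepsilon=1\}$ must contain no nontrivial line segment. We argue by contradiction. Suppose $x\neq y$ and the whole segment $[x,y]$ lies in $S_\varepsilon$. Then in particular the midpoint satisfies
\[
\Bigl\|\tfrac{x+y}{2}\Bigr\|_\varepsilon=1=\tfrac12\|x\|_\varepsilon+\tfrac12\|y\|_\varepsilon .
\]

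Next we split this equality into its two summands. Writing the left side as $\|\tfrac{x+y}{2}\|+\varepsilon|\tfrac{x+y}{2}|_2$, we have the two triangle inequalities
\[
\Bigl\|\tfrac{x+y}{2}\Bigr\|\leq\tfrac12\|x\|+\tfrac12\|y\|,
\qquad
\Bigl|\tfrac{x+y}{2}\Bigr|_2\leq\tfrac12|x|_2+\tfrac12|y|_2 .
\]
Their sum, the second weighted by $\varepsilon>0$, is an equality. Hence each of them must be an equality, and in particular $|x+y|_2=|x|_2+|y|_2$.

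Equality in the Euclidean triangle inequality forces $x$ and $y$ to lie on a common ray. This is the Cauchy--Schwarz equality case: $\langle x,y\rangle=|x|_2|y|_2$. Note that $x,y\neq0$, since both lie on $S_\varepsilon$. So $y=\lambda x$ with $\lambda>0$. But $\|y\|_\varepsilon=\lambda\|x\|_\varepsilon$ and both equal $1$, so $\lambda=1$ and $y=x$, which is a contradiction. Therefore $S_\varepsilon$ contains no nontrivial segment, and $\|\cdot\|_\varepsilon$ is strictly convex.

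There is no real obstacle here. The only step requiring care is the separation of the equality into its two components. This works because each component inequality is individually non-strict in the same direction, and because the coefficient $\varepsilon$ is strictly positive, so the Euclidean equality is genuinely forced rather than merely permitted.
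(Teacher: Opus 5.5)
Your proof is correct and follows essentially the same route as the paper. Both arguments force equality in the triangle inequality for $\|\cdot\|_\varepsilon$ at the midpoint, split it to obtain $|x+y|_2=|x|_2+|y|_2$, and conclude that $y=\lambda x$ with $\lambda>0$, so normalisation gives $\lambda=1$ and the contradiction $y=x$.
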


\begin{proof}
Suppose, to the contrary, that $\|\cdot\|_\varepsilon$ is not strictly
convex. Then there exist distinct vectors $x$ and $y$ such that
$\|x\|_\varepsilon=\|y\|_\varepsilon=1$ and
$\left\|\frac{x+y}{2}\right\|_\varepsilon=1$.
By positive homogeneity,
$\|x+y\|_\varepsilon
=2=\|x\|_\varepsilon+\|y\|_\varepsilon$,
so equality holds in the triangle inequality for
$\|\cdot\|_\varepsilon$. Since this inequality is obtained by adding
the triangle inequalities for $\|\cdot\|$ and
$\varepsilon|\cdot|_2$, equality implies
$|x+y|_2=|x|_2+|y|_2$.
Equality for the Euclidean norm implies that $x$ and $y$
lie on the same ray from the origin. Hence $y=\lambda x$ for some
$\lambda>0$. By positive homogeneity,
$1=\|y\|_\varepsilon=\|\lambda x\|_\varepsilon
      =\lambda\|x\|_\varepsilon=\lambda$.
Therefore $\lambda=1$ and $x=y$, contradicting their choice as distinct
vectors. Hence $\|\cdot\|_\varepsilon$ is strictly convex.
\end{proof}

For each $n$, let $D_n$ denote the norm-independent set of nonzero displacement vectors
over which the minimum in \eqref{eq:delta-original} is taken. We write
$\delta_{n,N}^{(\varepsilon)}=\min_{x\in D_n}\|x\|_\varepsilon$ and
$\delta_{n,N}^{(0)}=\min_{x\in D_n}\|x\|$. Choose $x_n\in D_n$ such that
$\|x_n\|=\delta_{n,N}^{(0)}$; its existence follows from the same
local-finiteness argument used above for $F_{\Lambda_N}(t)$. Since $\|x\|\leq\|x\|_\varepsilon$ for every $x$, we have
\[
    \delta_{n,N}^{(0)}
    \leq\delta_{n,N}^{(\varepsilon)}
    \leq\|x_n\|_\varepsilon
    =\delta_{n,N}^{(0)}+\varepsilon|x_n|_2.
\]
Hence $\delta_{n,N}^{(\varepsilon)}\to\delta_{n,N}^{(0)}$ as
$\varepsilon\to0^+$.

Suppose, for contradiction, that the original norm gives six distinct
distances
$\delta_{n_1,N}^{(0)},\ldots,\delta_{n_6,N}^{(0)}$. Let
$\rho=\frac13\min_{r\neq s}
    \left|\delta_{n_r,N}^{(0)}-\delta_{n_s,N}^{(0)}\right|>0$.
For all sufficiently small $\varepsilon>0$, each
$\delta_{n_r,N}^{(\varepsilon)}$ differs from
$\delta_{n_r,N}^{(0)}$ by less than $\rho$. It follows that the six
perturbed distances remain pairwise distinct. This contradicts the
strictly convex case, since $\|\cdot\|_\varepsilon$ is strictly convex
and therefore
$g_N^{\|\cdot\|_\varepsilon}(\boldsymbol{\alpha},L)\leq5$.
Consequently,
$g_N^{\|\cdot\|}(\boldsymbol{\alpha},L)\leq5$
for every norm $\|\cdot\|$ on $\mathbb R^2$. This completes the proof of Theorem~\ref{thm:main}.

\section*{Acknowledgements}

We are grateful to Jens Marklof for suggesting the norm-perturbation argument used to remove the strict convexity assumption, and to Alexey Glazyrin for valuable comments. Nikita Mironov is also grateful to the organisers of the Research Experience Program for Undergraduates 2026 (LIPS) for providing an excellent research platform and continuous support. The program was hosted by the Laboratory of Combinatorial and Geometric Structures at the Phystech School of Applied Mathematics and Informatics, MIPT.

\vspace{0.8cm}
N. A. Mironov, HSE University, dept. of Mathematics, 6 Usacheva Ulitsa, Moscow, 119048, Russia.

E-mail address: nikita.miron.444@gmail.com 
\vspace{0.5cm}

O. R. Musin, University of Texas Rio Grande Valley, School of Mathematical and Statistical Sciences, One West University Boulevard, Brownsville, TX, 78520, USA.

E-mail address: oleg.musin@utrgv.edu

\end{document}